\numberwithin{equation}{section}
\date{} 
\author{Valery Alexeev and Rita Pardini}
\title{On the existence of ramified abelian covers}
\begin{document}

\begin{abstract}
  Given a normal complete   variety $Y$ over an algebraically closed field $\mathbb K$, distinct irreducible  effective  Weil divisors $D_1, \dots D_n$ of $Y$ and  positive  integers $d_1,\dots d_n$, we spell out the conditions  for the existence of an abelian cover $X\to Y$ branched with order $d_i$ on $D_i$ for $i=1,\dots n$.
  
  As an application, we   prove that a  cover of a  normal complete toric variety 
  branched on  the torus-invariant divisors is itself a toric variety if  $\chr \mathbb K=0$ or if the cover is Galois of degree not divisible by $\chr \mathbb K$.\\
 {\em  2010 Mathematics Subject Classification:} 14E20, 14L30.
  \end{abstract}

\maketitle

{\em Dedicated to Alberto  Conte on his $70^{th}$ birthday.}
\setcounter{tocdepth}{1}
\tableofcontents

\section{Introduction}
\label{sec:intro}

Given a  projective variety $Y$ over an algebraically closed field $\mathbb K$ of characteristic $p\ge 0$  and effective divisors $D_1,\dots D_n$ of $Y$,  deciding whether there exists a Galois cover branched on $D_1, \dots D_n$ with given multiplicities is a very  complicated question, which in the complex case is essentially equivalent to describing the finite quotients of the  fundamental group of $Y\setminus(D_1\cup\dots \cup D_n)$. 

In Section \ref{sec:abelian} of this paper we answer  this question
for a normal variety $Y$ in the case that  the Galois group of the
cover is  abelian (Theorem \ref{thm:Gmax}) of order not divisible by $p$, using  the theory developed in 
\cite{Pardini_AbelianCovers} and  \cite{AlexeevPardini_Covers}.
In particular, we prove that when the  class group $\Cl(Y)$ is torsion free, every abelian cover of $Y$ branched on $D_1, \dots D_n$ with given multiplicities not divisible by $p$ is the quotient of a maximal such cover, unique up to isomorphism. 

In Section \ref{sec:toric-covers} we analyze the same question using toric geometry in the case when $Y$ is a normal complete toric variety and $D_1,\dots D_n$ are invariant divisors and we obtain results that parallel those in Section \ref{sec:abelian} (Theorem \ref{thm:Tmax}). Combining the two approaches we are able to show that   any  
 cover of a normal complete toric variety branched on the invariant divisors is toric if  $p=0$ or if the cover is Galois of order not divisible by $p$ (Theorem \ref{thm:abelian-toric}). 
\smallskip

{\em Acknowledgments.} We wish to thank Angelo Vistoli for useful  discussions on the topic of this paper (cf. Remark \ref{rem:angelo}). We also wish to thank  Boaz Moerman, who pointed out to us that the earlier version of Theorem \ref{thm:abelian-toric} was incorrect. 
 This was due to the fact that in the proof we made use of \cite[Prop. 1]{Miyanishi}, where the  assumption that the degree of the cover be prime to $p$  is made only implicitly.
\bigskip

\noindent{\bf Notation.} 
$G$ always denotes a finite  group, almost always abelian,  and $G^*:=\Hom(G,\bK^*)$ the group of characters; $o(g)$ is the order of the element $g\in G$ and $|H|$ is the cardinality of a subgroup $H<G$. We work over an algebraically closed field $\bK$ whose characteristic $p$ does not divide the order of  the finite abelian groups we consider.\\
If $A$ is an abelian group we write $A[d]:=\{a\in A\ |\ da=0\}$ ($d$ an integer),  $A\vi:=\Hom (A,\bZ)$ and we denote by $\Tors(A)$ the torsion subgroup of $A$.\\
The smooth part of a variety $Y$ is denoted by $Y_{\sm}$.
The symbol $\equiv$ denotes linear equivalence of divisors. If $Y$ is a normal variety we denote by $\Cl(Y)$ the group of classes, namely the group of Weil divisors up to linear equivalence. 

\section{Abelian covers}\label{sec:abelian}

\subsection{The fundamental relations} \label{ssec:fundrel}
We quickly recall  the theory of abelian covers (cf. \cite{Pardini_AbelianCovers},  \cite{AlexeevPardini_Covers}, and also \cite{PardiniTovena_fundgroup}) in the most   suitable form  for the applications considered here.

There are slightly different definitions of abelian covers in the literature (see, for instance,  \cite{AlexeevPardini_Covers} that treats also the non-normal case). Here we restrict our attention to the case of normal varieties, but we do not require that the covering map be flat; hence we  define a cover as a finite  morphism $\pi\colon X\to Y$ of normal  
varieties and we say that  $\pi$  is an abelian cover  if it is a Galois morphism  with abelian  Galois group $G$ ($\pi$ is also called a ``$G$-cover'').

Recall that, as already stated in the Notations,   throughout all the paper we assume that $G$ has  order not divisible by $\chr \bK$.

To every component $D$ of the branch locus of $\pi$ we associate the pair $(H,\psi)$, where $H<G$ is the cyclic subgroup consisting of the elements of $G$ that fix the preimage of $D$ pointwise (the {\em inertia subgroup} of  $D$) and $\psi$ is the element  of the character group $H^*$ given by the    natural   representation of $H$ on the normal space to the preimage of $D$ at a general point (these definitions are well posed since $G$ is abelian).
It can be shown  that $\psi$ generates the group $H^*$.

If we fix a primitive  $d$-th root $\zeta$ of $1$, where $d$ is the exponent of the group $G$, then a pair $(H,\psi)$ as above is determined by  the generator    $g\in H$ such that $\psi(g)=\zeta^{\frac {d} {o(g)}}$. We follow  this convention and attach to  every component $D_i$ of the branch locus of $\pi$ a nonzero element $g_i\in G$.

If $\pi$ is flat, which is always the case when $Y$ is smooth, the sheaf $\pi_*\OO_X$ decomposes under the $G$-action as $\oplus_{\chi\in G^*}L_{\chi}\inv$, where the $L_{\chi}$ are line bundles ($L_1=\OO_Y$) and $G$ acts on $L_{\chi}\inv$  via the character $\chi$. 

Given $\chi\in G^*$ and $g\in G$, we denote by $\ol{\chi}(g)$ the smallest non-negative integer $a$ such that $\chi(g)=\zeta^{\frac {ad}{o(g)}}$.  The main result of \cite{Pardini_AbelianCovers} is that the $L_{\chi}$, $D_i$ (the {\em building data} of $\pi$) satisfy the following {\em fundamental relations}:
\begin{equation} \label{eq:fundrel}
L_{\chi}+L_{\chi'}\equiv L_{\chi+\chi'}+\sum_{i=1}^n \epsi^i_{\chi,\chi'} D_i\qquad \forall \chi,\chi' \in G^*
\end{equation}
where $\epsi^i_{\chi,\chi'}=\lfloor \frac{\ol{\chi}(g_i)+\ol{\chi'}(g_i)}{o(g_i)}\rfloor$. (Notice that the coefficients $\epsi^i_{\chi,\chi'}$ are equal either to $0$ or to $1$). Conversely, distinct irreducible divisors $D_i$ and line bundles $L_{\chi}$ satisfying \eqref{eq:fundrel}  are the building data of a flat (normal) $G$-cover  $X\to Y$; in addition, if  $h^0(\OO_Y)=1$ then $X\to Y$ is uniquely determined up to isomorphism of $G$-covers.\\

If we fix characters $\chi_1,\dots \chi_r\in G^*$ such that $G^*$ is the direct sum of the subgroups generated by the $\chi_j$, and we set $L_j:=L_{\chi_j}$, $m_j:=o(\chi_j)$, then the solutions of the fundamental relations \eqref{eq:fundrel} are in one-one correspondence with the solutions of the following {\em reduced fundamental relations}:
\begin{equation} \label{eq:redfund}
m_jL_j\equiv \sum _{i=1}^n\frac{m_j \overline{\chi_j}(g_i)}{d_i}D_i, \qquad j=1,\dots r
\end{equation}

As before, denote by $d$ the exponent of $G$; notice that if  $\Pic(Y)[d]=0$, then for fixed $(D_i, g_i)$, $i=1,\dots n$, the solution of \eqref{eq:redfund} is unique, hence the {\em branch data} $(D_i, g_i)$ determine the cover.
\bigskip

In order to deal with the case when $Y$ is normal but not smooth,  we
observe first that the cover $X\to Y$ can be recovered from  its
restriction $X'\to  Y\sm$ to the smooth locus by taking the integral
closure of $Y$ in the extension  $\bK(X')\supset \bK(Y)$.
Observe then that, since   the complement $Y\setminus Y\sm$ of the smooth part has
codimension $>1$,  we have   $h^0(\OO_{Y\sm})=h^0(\OO_Y)=1$, and thus
  the cover $X'\to Y\sm$ is determined by the building data
$L_{\chi}, D_i$. Using the identification
$\Pic(Y\sm)=\Cla(Y\sm)=\Cla(Y)$, 
we can  regard the $L_{\chi}$ as elements  of $\Cla(Y)$ and, taking the closure, the $D_i$ as Weil divisors on $Y$, and we can interpret the fundamental relations as equalities in $\Cla(Y)$. In this sense, if $Y$ is normal variety with $h^0(\OO_Y)=1$, then the  $G$-covers $X\to Y$ are determined by the building data up to isomorphism.
\bigskip

We say that an abelian cover $\pi\colon X\to Y$ is {\em totally ramified} if  the inertia subgroups of the divisorial components of the branch locus of $\pi$ generate $G$, or, equivalently, if $\pi$ does not factorize through a cover $X'\to Y$ that is \'etale over $Y\sm$. We observe that a totally ramified cover is necessarily connected; conversely, equations \eqref{eq:redfund} imply that if $G$ is an abelian group of exponent $d$ and $Y$ is a variety such that  $\Cla(Y)[d]=0$, then any connected $G$-cover  of $Y$ is totally ramified.

\subsection{The maximal cover}\label{ssec:maxcover}

Let $Y$ be a complete normal   variety,  let $D_1, \dots D_n$ be distinct  irreducible effective divisors of $Y$ and let $d_1,\dots d_n$ be positive integers (it is convenient to allow the possibility that $d_i=1$ for  some $i$). We set $d:={\rm lcm}(d_1,\dots d_n)$.

We say that a Galois cover $\pi\colon X\to Y$ is {\em branched on $D_1, \dots D_n$ with orders $d_1,\dots d_n$} if:

\begin{itemize}
\item the divisorial part of the branch  locus of $\pi$ is contained in $\sum_i D_i$; 
\item the ramification order of $\pi$ over $D_i$ is equal to $d_i$.
\end{itemize}

Let $\eta\colon \wt Y\to Y$ be a resolution of the singularities and
set  
${\NN}(Y):=\Cla(Y)/\eta_*\!\Pic^0(\wt Y)$. Since  the map $\eta_*\colon \Pic(\wt Y)=\Cla(\wt Y)\to \Cl(Y)$ is surjective, $\N(Y)$ is a quotient of  the N\'eron-Severi group $\NS(\wt Y)$, hence it is finitely generated.  It follows that $\eta_*\!\Pic^0(\wt Y)$ is the largest divisible subgroup of $\Cla(Y)$ and therefore $\N(Y)$ does not depend on the choice of the resolution of $Y$ (this is easily checked also by a geometrical argument).  
The group $\Cla(Y)\vi$ coincides with $\N(Y)\vi$, hence it is a finitely generated free abelian group of rank equal to the rank of $\N(Y)$.

Consider the map  $\bZ^n\to \Cl(Y)$ that maps the $i$-th canonical generator to the class of $D_i$,
  let $\phi\colon \Cla(Y)^{\vee}\to \oplus_{i=1}^n\bZ_{d_i}$  be the map obtained by composing the dual map  $\Cla(Y)\vi\to (\bZ^n)\vi$ with $(\bZ^n)\vi =\bZ^n\to \oplus_{i=1}^n\bZ_{d_i}$ and let $K_{\min}$  be the image of $\phi$. 
  Let $G_{\max}$ be the abelian group defined by the exact sequence:
\begin{equation}\label{eq:Gmax}
0\to K_{\min} \to \oplus_{i=1}^n \bZ_{d_i}\to G_{\max}\to 0.
\end{equation} 

Then we have the following:

\begin{thm} \label{thm:Gmax}
Let $Y$ be a normal    variety with $h^0(\OO_Y)=1$, let $D_1, \dots D_n$ be distinct irreducible effective divisors, let $d_1, \dots d_n$ be positive integers and set $d:={\rm lcm}(d_1,\dots d_n)$. 
Then:
\begin{enumerate}
\item If $X\to Y$ is a totally ramified $G$-cover branched on $D_1,\dots D_n$ with orders $d_1,\dots d_n$, then:
\begin{itemize}
\item[(a)] the map $\oplus_{i=1}^n\Z_{d_i}\to G$ that maps $1\in \bZ_{d_i}$ to $g_i$  descends to  a surjection $G_{\max}\twoheadrightarrow G$;
\item[(b)] the map $\bZ_{d_i}\to G_{\max}$ is injective for every $i=1,\dots n$.
\end{itemize}
\item If  the map $\bZ_{d_i}\to G_{\max}$  is injective for $i=1,\dots n$ and $\NN(Y)[d]=0$, then there exists a maximal  totally ramified abelian cover  $X_{\max}\to Y$  branched on $D_1,\dots D_n$ with orders $d_1, \dots d_n$; the  Galois group of $X_{\max}\to Y$ is equal to $G_{\max}$.

\item If  the map $\bZ_{d_i}\to G_{\max}$  is injective for $i=1,\dots n$ and $\Cla(Y)[d]=0$,  then the cover $X_{\max}\to Y$ is unique up to isomorphism of $G_{\max}$-covers and every totally ramified abelian cover $X\to Y$ branched on $D_1,\dots D_n$ with orders $d_1,\dots d_n$ is a quotient of $X_{\max}$ by a subgroup of $G_{\max}$.
\end{enumerate}
\end{thm}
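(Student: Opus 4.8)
The plan is to reduce everything to the existence and uniqueness statements for building data recalled in Section~\ref{ssec:fundrel}, exploiting that $\Cl(Y)$ is an extension of the finitely generated group $\NN(Y)$ by the divisible group $\eta_*\Pic^0(\wt Y)$. For part (1), I would first record the consequence of \eqref{eq:fundrel} that for every $\chi\in G^*$, with $m=o(\chi)$, one has $mL_\chi\equiv \sum_i \frac{m\ol{\chi}(g_i)}{d_i}D_i$ in $\Cl(Y)$ (telescoping \eqref{eq:fundrel} along $\chi,2\chi,\dots$ and using $d_i\mid m\ol{\chi}(g_i)$). Pairing this with an arbitrary $\lambda\in\Cl(Y)\vi$ gives $\sum_i \frac{\ol{\chi}(g_i)}{d_i}\lambda([D_i])=\lambda(L_\chi)\in\bZ$ for all $\chi,\lambda$. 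Unwinding the definitions of $\phi$ and of $K_{\min}$, this says exactly that the surjection $p\colon\oplus_i\bZ_{d_i}\to G$, $1\in\bZ_{d_i}\mapsto g_i$ (surjective since the cover is totally ramified, and well defined since the cyclic inertia of $D_i$ has order $o(g_i)=d_i$), kills $K_{\min}$; hence $p$ descends to the asserted surjection $G_{\max}\twoheadrightarrow G$, proving (a). Part (b) is then immediate: the composite $\bZ_{d_i}\to G_{\max}\to G$ sends $1\mapsto g_i$ and is injective because $o(g_i)=d_i$, so the first map is injective too.

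For part (2), I would construct the cover from the converse half of the theory, exhibiting branch data $(D_i,g_i)$ and classes $L_\chi\in\Cl(Y)$ solving \eqref{eq:fundrel}. Take $g_i\in G_{\max}$ to be the image of $1\in\bZ_{d_i}$; by hypothesis $o(g_i)=d_i$, and the $g_i$ generate $G_{\max}$, so the resulting cover is totally ramified with ramification order $d_i$ over $D_i$. By the reduction to \eqref{eq:redfund}, it suffices, for a chosen decomposition $G_{\max}^*=\oplus_j\langle\chi_j\rangle$ with $m_j=o(\chi_j)$, to solve $m_jL_j\equiv R_j:=\sum_i\frac{m_j\ol{\chi_j}(g_i)}{d_i}D_i$, i.e.\ to show each $R_j$ is $m_j$-divisible in $\Cl(Y)$. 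Since $\eta_*\Pic^0(\wt Y)$ is divisible the extension $0\to\eta_*\Pic^0(\wt Y)\to\Cl(Y)\to\NN(Y)\to0$ splits, so I treat the two summands separately; on the divisible summand $m_j$-divisibility is automatic. On $\NN(Y)$, the membership $\chi_j\in G_{\max}^*$ means $\chi_j$ kills $K_{\min}$, which by the computation of part (1) translates into $\sum_i\frac{\ol{\chi_j}(g_i)}{d_i}\lambda([D_i])\in\bZ$ for all $\lambda\in\NN(Y)\vi=\Cl(Y)\vi$; equivalently, the $\mathbb{Q}$-class $v_j:=\sum_i\frac{\ol{\chi_j}(g_i)}{d_i}[D_i]$ has image in $\NN(Y)\otimes\mathbb{Q}$ lying in the lattice $\NN(Y)/\Tors$. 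Hence the $\NN(Y)$-component of $R_j=m_jv_j$ equals $m_jw_j+t$ with $w_j\in\NN(Y)$ and $t\in\Tors(\NN(Y))$. Finally, $\NN(Y)[d]=0$ forces $|\Tors(\NN(Y))|$ coprime to $d$, hence to $m_j\mid d$, so multiplication by $m_j$ is bijective on $\Tors(\NN(Y))$ and $t$ is $m_j$-divisible as well; this produces the $L_j$, hence the building data of $X_{\max}$.

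For part (3), note $\Cl(Y)[d]=0$ gives $\NN(Y)[d]=0$, so $X_{\max}$ exists by (2); moreover division by each $m_j\mid d$ in $\Cl(Y)$ is now unique (a difference of two solutions lies in $\Cl(Y)[m_j]\subseteq\Cl(Y)[d]=0$), so the $L_j$ and all building data are uniquely determined, and $h^0(\OO_Y)=1$ yields uniqueness of $X_{\max}$ up to isomorphism of $G_{\max}$-covers. For the last assertion, given a totally ramified $G$-cover $X\to Y$ with the prescribed branch data, part (1a) gives a surjection $\rho\colon G_{\max}\twoheadrightarrow G$; set $H=\ker\rho$ and compare $X$ with the quotient $G$-cover $X_{\max}/H$. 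Both are $G$-covers of $Y$ with branch data $(D_i,g_i)$: for $X_{\max}/H$ the inertia of $D_i$ is $\rho(g_i)=g_i$, and its building data is the restriction $\{L_\chi^{X_{\max}}\}_{\chi\in(G/H)^*}$, where $(G/H)^*\subseteq G_{\max}^*$ are the characters trivial on $H$. Since $\ol{\chi}(g_i)$ is computed identically in $G^*$ and in $G_{\max}^*$, these classes solve the fundamental relations of $X$, so by the uniqueness just proved they equal the building data of $X$, whence $X\cong X_{\max}/H$ as $G$-covers.

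I expect the divisibility step in part (2) to be the main obstacle: the rest is a fairly direct dictionary between branch/building data and the combinatorics of \eqref{eq:Gmax}, but proving that $R_j$ is \emph{exactly} $m_j$-divisible in $\Cl(Y)$ requires both splitting off the divisible part of $\Cl(Y)$ and extracting the coprimality $\gcd(|\Tors(\NN(Y))|,d)=1$ from $\NN(Y)[d]=0$.
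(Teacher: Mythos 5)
Your proposal is correct and follows essentially the same route as the paper's proof: part (1) by pairing the relation $mL_\chi\equiv\sum_i\frac{m\ol\chi(g_i)}{d_i}D_i$ with elements of $\Cl(Y)^{\vee}$ (the paper does the identical computation in coordinates, via a basis $H_j$ of $\NN(Y)/\Tors(\NN(Y))$ and the generators $z_j$ of $K_{\min}$), part (2) by solving the reduced relations \eqref{eq:redfund} using that $\eta_*\Pic^0(\wt Y)$ is divisible together with the coprimality of $|\Tors(\NN(Y))|$ and $d$ forced by $\NN(Y)[d]=0$, and part (3) by uniqueness of covers with prescribed branch data when $\Cl(Y)[d]=0$, combined with the surjection from (1). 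The only cosmetic differences are that you split the extension $0\to\eta_*\Pic^0(\wt Y)\to\Cl(Y)\to\NN(Y)\to0$ where the paper instead lifts solutions along $\Cl(Y)\to\NN(Y)$ using divisibility of the kernel, and that your handling of the torsion divisibility in (2) and of the quotient building data in (3) spells out steps the paper leaves terse.
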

\begin{proof} Let $H_1,\dots H_t\in \NN(Y)$ be  elements whose classes are free generators of the abelian group $\NN(Y)/\Tors(\NN(Y))$, and write:
\begin{equation}\label{eq:Hj}
D_i=\sum_{j=1}^t a_{ij}H_j \mod \Tors(\NN(Y)), \qquad j=1,\dots t
\end{equation}
Hence, the subgroup $K_{\min}$ of $\oplus_{i=1}^n \bZ_{d_i}$ is generated by the elements $z_j:=(a_{1j},\dots a_{nj})$, for $j=1,\dots t$. 

Let $X\to Y$ be a totally ramified $G$-cover branched on $D_1,\dots D_n$ with orders $d_1,\dots d_n$ and let $(D_i, g_i)$ be its  branch data. Consider the map $\oplus_{i=1}^n \bZ_{d_i}\to G$ that maps $1\in \bZ_{d_i}$ to $g_i$: this map is surjective, by the assumption that $X\to Y$ is totally ramified, and its restriction to $\bZ_{d_i}$ is injective for $i=1,\dots n$, since the cover is branched  on $D_i$ with order $d_i$. If we denote by $K$ the kernel of $\oplus_{i=1}^n\bZ_{d_i}\to G$, to prove (1) it suffices to show that $K\supseteq K_{\min}$. Dually, this is equivalent to showing that $G^*\subseteq K_{\min}^{\perp}\subset \oplus_{i=1}^n(\bZ_{d_i})^*$. Let $\psi_i\in (\bZ_{d_i})^*$  be the generator that maps $1\in \bZ_{d_i}$ to $\zeta^{\frac {d}{d_i}}$ and write $\chi\in G^*$ as $(\psi_1^{b_1}, \dots \psi_n^{b_n})$, with $0\le b_i< d_i$; if $o(\chi)=m$ then \eqref{eq:redfund} gives $mL_{\chi}\equiv \sum_{i=1}^n \frac {mb_i}{d_i}D_i $. Plugging  \eqref{eq:Hj} in this equation  we obtain that  $\sum_{i=1}^n \frac {b_ia_{ij}}{d_i}$ is an integer for  $j=1,\dots t$, namely $\chi\in K_{\min}^{\perp}$.
\smallskip 

(2) Let  $\chi_1, \dots \chi_r$ be a basis of $G_{\max}^*$ and, as above, for $s=1,\dots r$ write $\chi_s=(\psi_1^{b_{s1}}, \dots \psi_n^{b_{sn}})$, with $0\le b_{si}< d_i$. Since by assumption $\NN(Y)[d]=0$,  by the proof of (1) the elements $\sum_{j=1}^t(\sum_{i=1}^n\frac{ b_{si}a_{ij}}{d_i})H_j$,  $s=1, \dots r$, can be lifted to   solutions $\ol{L_s}\in \NN(Y)$ of the reduced fundamental relations \eqref{eq:redfund}   for a $G_{\max}$-cover with branch data $(D_i, g_i)$, where $g_i\in G$ is the image of $1\in \bZ_{d_i}$. Since the kernel  of $\Cla(Y)\to N(Y)$ is a divisible group,   it is possible to lift the $\ol{L_s}$ to solutions $L_s\in \Cla(Y)$. We let $X_{\max}\to Y$ be the $G_{\max}$-cover determined by these solutions. It is a totally ramified cover since the map $\oplus_{i=1}^n \bZ_{d_i}\to G_{\max}$ is surjective by the definition of $G_{\max}$.
\smallskip

(3) Since $\Cla(Y)[d]=0$,  any $G$-cover such that the exponent of $G$ is a divisor of $d$ is determined uniquely by the branch data; in particular, this holds for the cover $X_{\max}\to Y$ in (2)  and for every intermediate cover $X_{\max}/H\to Y$, where $H<G_{\max}$.  The claim now follows by (1). 
\end{proof}
\begin{example} Take $Y=\pp^{n-1}$ and let $D_1, \dots D_n$ be the coordinate hyperplanes. In this case  the group $K_{\min}$ is generated by $(1,\dots 1)\in \oplus_{i=1}^n \bZ_{d_i}$. Since any connected  abelian cover of $\pp^{n-1}$ is totally ramified,   by Theorem \ref{thm:Gmax} there exists a abelian cover of $\pp^{n-1}$ branched over $D_1, \dots D_n$ with orders   $d_1,\dots d_n$ iff $d_i$ divides $lcm(d_1,\dots ,\widehat{d_i}, \dots d_n)$  for every  $i=1,\dots n$. 
For $d_1=\dots =d_n=d$, then $G_{\max}=\Z_d^{n}/\!\!<\!(1,\dots 1)\!>$ and $X_{\max}\to \pp^{n-1}$ is the cover $\pp^{n-1}\to\pp^{n-1}$ defined by $[x_1,\dots x_n]\mapsto [x_1^d,\dots x_n^d]$.

 In general, $X_{\max}$ is a weighted projective space $\pp(\frac {d}{ d_1},\dots \frac {d} {d_n})$ and the cover is given by $[x_1,\dots x_n]\mapsto [x_1^{d_1}, \dots x_n^{d_n}]$. 
\end{example}

\section{Toric covers}
\label{sec:toric-covers}

\begin{notations}\label{notations:toric}
  Here, we fix the notations which are standard in toric geometry. A  (complete normal) 
  toric variety $Y$ corresponds to a fan $\Sigma$ living in the vector
  space $N\otimes\bR$, where $N\cong\bZ^s$. The dual lattice is
  $M=N^{\vee}$. The torus is $T=N\otimes\bC^* = \Hom(M,\bC^*)$. 

  The integral vectors $r_i\in N$ will denote the integral generators
  of the rays $\sigma_i\in\Sigma(1)$ of the fan $\Sigma$. They are in
  a bijection with the $T$-invariant Weil divisors $D_i$
  ($i=1,\dotsc,n$) on $Y$.
\end{notations}

\begin{definition}\label{def:toric-cover}
  A \emph{ toric cover}
   $f\colon X\to Y$ is a finite morphism of
  toric varieties corresponding to the map of fans $F\colon
  (N',\Sigma')\to (N,\Sigma)$ such that:
  \begin{enumerate}
  \item $N'\subseteq N$ is a sublattice of finite index, so that
    $N'\otimes\bR = N\otimes\bR$.
  \item $\Sigma'=\Sigma$.
  \end{enumerate}
\end{definition}

The proof of the following lemma is immediate.

\begin{lemma}\label{lem:ab-toric-cover}
  The morphism $f$ has the following properties:
  \begin{enumerate}
  \item It is equivariant with respect to the
    homomorphism of tori $T'\to T$.
  \item It is an abelian  cover with
   Galois group $G = \ker(T'\to T) = N/N'$.
  \item It is ramified only along the boundary divisors
    $D_i$, with multiplicities $d_i\ge1$ defined by the condition that
    the integral generator of $N'\cap \bR_{\ge0} r_i$ is $d_ir_i$.

  \end{enumerate}

\end{lemma}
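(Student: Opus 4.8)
The plan is to verify the three claimed properties directly from the definition of a toric cover, translating each statement about the map of fans $F\colon (N',\Sigma)\to(N,\Sigma)$ into the dictionary of toric geometry. Since the lemma is flagged as having an ``immediate'' proof, the strategy is not to discover anything deep but to assemble standard facts; the only real care is needed in computing the ramification multiplicities in part (3).

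\smallskip

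First I would treat property (1). The inclusion $N'\subseteq N$ of lattices dualizes to an inclusion $M=N^\vee \hookrightarrow (N')^\vee = M'$, which induces the homomorphism of tori $T'=\Hom(M',\bK^*)\to T=\Hom(M,\bK^*)$. Because the fan $\Sigma$ is the same for both lattices and $F$ is induced by the lattice inclusion, $f$ respects the torus actions; equivariance is the functoriality of the toric construction applied to a map of fans that is compatible with the lattice map. This is a standard property of toric morphisms, so I would state it and cite the basic theory.

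\smallskip

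Next I would handle property (2). Applying $\Hom(-,\bK^*)$ to the short exact sequence $0\to N'\to N\to N/N'\to 0$ and using that $N/N'$ is finite (as $N'$ has finite index) gives $\ker(T'\to T)\cong \Hom(N/N',\bK^*)\cong N/N'$, so $G=N/N'$ is a finite abelian group acting on $X$ with quotient $Y$; hence $f$ is an abelian cover with Galois group $G$. I would note that finiteness of $f$ follows from $N'$ having finite index (so $T'\to T$ is surjective with finite kernel and $F$ is proper with finite fibers), and that the quotient is $Y$ because the coordinate rings are the invariant subrings, $\bK[\sigma^\vee\cap M]=(\bK[\sigma^\vee\cap M'])^G$ on each affine chart.

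\smallskip

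Finally, property (3) is the step requiring genuine computation and is the main obstacle, modest as it is. The cover $X\to Y$ is \'etale over the torus $T\subset Y$ since $F$ restricts to the isogeny $T'\to T$; ramification can therefore only occur along the boundary $\sum_i D_i$. To pin down the multiplicity along $D_i$, I would work on the affine chart corresponding to the ray $\sigma_i$ with integral generator $r_i$. The divisor $D_i$ is cut out by the character corresponding to a generator of the dual, and the ramification order is read off from how the local uniformizer pulls back. Concretely, the integral generator of $N'\cap \bR_{\ge0}r_i$ being $d_i r_i$ means that the primitive generator of the ray in the sublattice $N'$ is $d_i$ times the primitive generator in $N$; this is exactly the ratio of the two orders of vanishing of the torus-invariant uniformizer, so $f$ ramifies to order $d_i$ along $D_i$. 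I would make this precise by comparing the valuation given by $r_i$ on $\bK(Y)=\bK(M)$ with the valuation given by the primitive generator of the same ray in $N'$ on $\bK(X)=\bK(M')$, noting that the boundary divisor of the chart in $X$ maps to $D_i$ with local degree equal to this ratio $d_i$. Because all $d_i\ge 1$ automatically (the index is positive), this completes the verification.
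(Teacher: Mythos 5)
Your proof is correct, and it is exactly the standard toric-dictionary verification the paper has in mind: the paper offers no argument at all for this lemma, stating only that the proof is immediate. One bookkeeping remark on your part (2): the kernel of $T'\to T$ is most directly obtained by applying $\Hom(-,\bK^*)$ to $0\to M\to M'\to M'/M\to 0$ (or by tensoring $0\to N'\to N\to N/N'\to 0$ with $\bK^*$), whereas applying $\Hom(-,\bK^*)$ to the $N$-sequence as you wrote computes the kernel of the dual isogeny $\Hom(N,\bK^*)\to \Hom(N',\bK^*)$; since $\Hom(N/N',\bK^*)\cong \Hom(M'/M,\bK^*)\cong N/N'$ when $\chr\bK$ does not divide the index (the paper's standing assumption, also needed for your \'etaleness and invariant-ring claims), your conclusion is unaffected.
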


\begin{proposition}\label{prop:dia-from-cover}
  Let $Y$ be a complete  toric variety such that $\Cla(Y)$ is torsion free, and $X\to
  Y$ be a toric cover. Then, with notations as above, there
  exists the following commutative diagram with exact rows and columns.
  \begin{displaymath}
    \xymatrix{
      &&0 \ar[d]
      &0 \ar[d] \\
      &&\Cla(Y)^{\vee} \ar[r]\ar[d]
      &K \ar[d] \\
      0 \ar[r] 
      &\oplus_{i=1}^n \bZ d_i D_i^* \ar[r]\ar^{p'}[d]
      &\oplus_{i=1}^n \bZ D_i^* \ar[r]\ar^p[d]
      &\oplus_{i=1}^n \bZ_{d_i}  \ar[r]\ar[d]
      &0 \\
      0 \ar[r]
      &N' \ar[r]
      &N \ar[r]\ar[d]
      &G \ar[r]\ar[d]
      &0 \\
      &&0
      &0 
      }
  \end{displaymath}
  (Here the $D_i^*$ are formal symbols denoting a basis of $\bZ^n$).
 Moreover, each of the homomorphisms
  $\bZ_{d_i} \to G$ is an embedding.
\end{proposition}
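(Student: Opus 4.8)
The plan is to obtain the diagram by dualizing the standard exact sequence of the toric variety $Y$ and then reading off the kernels and cokernels of the vertical maps. The backbone is the middle column. Since $Y$ is complete it has no torus factor, so the ray generators $r_i$ span $N\otimes\bR$ and the standard sequence
\[
0\to M\to\bigoplus_{i=1}^n\bZ D_i\to\Cla(Y)\to 0
\]
is exact, the first map being $m\mapsto\sum_i\langle m,r_i\rangle D_i$ and the second $D_i\mapsto[D_i]$. Applying $\Hom(-,\bZ)$ and using that $\Cla(Y)$ is finitely generated and torsion free (so $\operatorname{Ext}^1(\Cla(Y),\bZ)=0$), I obtain the exact middle column
\[
0\to\Cla(Y)^{\vee}\to\bigoplus_{i=1}^n\bZ D_i^*\xrightarrow{\ p\ }N\to 0,
\]
where $p(D_i^*)=r_i$ since $p$ is dual to $m\mapsto(\langle m,r_i\rangle)_i$ and $M^{\vee}=N$. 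This is the one place where the torsion-freeness hypothesis enters: it is exactly what makes $p$ surjective, equivalently what forces the $r_i$ to generate $N$ rather than a proper finite-index sublattice.

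Next I would insert the two exact rows. The bottom row $0\to N'\to N\to G\to 0$ is the definition $G=N/N'$ from Lemma \ref{lem:ab-toric-cover}. The top row is the direct sum over $i$ of the sequences $0\to\bZ d_iD_i^*\to\bZ D_i^*\to\bZ_{d_i}\to 0$, hence exact. I take $p'$ to be the restriction of $p$ to $\bigoplus_i\bZ d_iD_i^*$; it lands in $N'$ because $p(d_iD_i^*)=d_ir_i$ and $d_ir_i\in N'$ by the defining property of $d_i$ (the integral generator of $N'\cap\bR_{\ge0}r_i$ is $d_ir_i$). The two left-hand squares then commute by construction.

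Because the lower-left square commutes, $p$ descends to a homomorphism $\bar p\colon\bigoplus_i\bZ_{d_i}\to G$ on the cokernels of the two horizontal inclusions, sending the class of $D_i^*$ to the class of $r_i$ in $N/N'$; this is the right-hand vertical arrow. As $p$ is surjective, the $r_i$ generate $N$ and their classes generate $G$, so $\bar p$ is surjective; setting $K:=\ker\bar p$ yields the exact right column $0\to K\to\bigoplus_i\bZ_{d_i}\to G\to 0$. Finally the arrow $\Cla(Y)^{\vee}\to K$ is the restriction of the middle horizontal surjection $\bigoplus_i\bZ D_i^*\to\bigoplus_i\bZ_{d_i}$ to $\ker p=\Cla(Y)^{\vee}$: by commutativity of the right square this restriction carries $\ker p$ into $\ker\bar p=K$. (Equivalently, the whole diagram is the one produced by the snake lemma applied to the map between the two short exact rows, once one records $\ker p=\Cla(Y)^{\vee}$ and $\operatorname{coker}p=0$.)

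It remains to prove the ``moreover'': the composite $\bZ_{d_i}\hookrightarrow\bigoplus_j\bZ_{d_j}\xrightarrow{\ \bar p\ }G$, which sends $1$ to the class of $r_i$, is injective, i.e.\ $r_i$ has order exactly $d_i$ in $G=N/N'$. Its order is the least $k>0$ with $kr_i\in N'$; since $N'\cap\bR_{\ge0}r_i=\bZ_{\ge0}(d_ir_i)$, the positive multiples of $r_i$ lying in $N'$ are precisely the multiples of $d_ir_i$, so that least $k$ is $d_i$, as required. The only substantive input in the whole argument is the surjectivity of $p$, which rests on torsion-freeness of $\Cla(Y)$; everything else is the formal bookkeeping of a $3\times 3$ diagram together with this order computation.
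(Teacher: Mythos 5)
Your proof is correct and follows essentially the same route as the paper: both dualize the standard sequence $0\to M\to\oplus_i\bZ D_i\to\Cla(Y)\to 0$ using torsion-freeness of $\Cla(Y)$ (your $\operatorname{Ext}^1$-vanishing is the paper's ``split, then dualize'') to get the middle column, take the other two rows from Lemma \ref{lem:ab-toric-cover} and the obvious direct-sum sequence, define $K$ as the kernel of the induced surjection $\oplus_i\bZ_{d_i}\to G$, and deduce injectivity of $\bZ_{d_i}\to G$ from the condition that $d_ir_i$ generates $N'\cap\bR_{\ge0}r_i$. You simply spell out the diagram-chase bookkeeping and the order computation that the paper leaves implicit.
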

\begin{proof}
  The third row appeared in Lemma \ref{lem:ab-toric-cover}, and the second row is the
  obvious one. 

  It is well known that the boundary divisors on a complete
  normal  toric variety span the group $\Cla(Y)$,  and that there
  exists the following  short exact sequence of lattices:
  $$0 \to  M \lra \oplus_{i=1}^n \bZ D_i \lra \Cla(Y) \to 0.$$
  Since $\Cla(Y)$ is torsion free by assumption, this sequence is split and dualizing it one obtains the central column. 
  Since $\oplus_{i=1}^n\bZ D_i^* \to N$ is surjective, then so is
  $\oplus_{i=1}^n\bZ_{d_i}  \to G$. The group $K$ is defined as the 
  kernel of this map.

  Finally, the condition that $\bZ_{d_i} \to G$ is injective is
  equivalent to the condition that the integral generator of $N'\cap
  \bR_{\ge0} r_i$ is $d_ir_i$, which holds by Lemma~\ref{lem:ab-toric-cover}.
\end{proof}

\begin{theorem} \label{thm:Tmax}
Let $Y$ be a complete  toric variety such that $\Cla(Y)$ is torsion free, let $d_1,\dots d_n$ be positive integers and 
let $K_{\min}$ and $G_{\max}$ be defined as in sequence \eqref{eq:Gmax}. Then:
\begin{enumerate}
\item There exists a toric cover branched on $D_i$ of order $d_i$, $i=1,\dots n$,  iff the map $\bZ_{d_i}\to G_{\max}$ is injective for $i=1,\dots n$. 
\item If condition (1) is satisfied, then among all the  toric covers of $Y$ ramified over the
  divisors~$D_i$  with multiplicities $d_i$ there exists a maximal
  one $X_{\tmax}\to Y$, with Galois group $G_{\max}$,  such that 
  any other  toric cover $X\to Y$ with the same
  branching  orders is a quotient $X=X_{\tmax}/H$ by a
  subgroup $H< G_{\max}$.
  \end{enumerate}
\end{theorem}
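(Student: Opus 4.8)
The plan is to read off everything from the commutative diagram of Proposition~\ref{prop:dia-from-cover} and then to run its construction backwards. First I would locate $K_{\min}$ inside that diagram: by definition $K_{\min}$ is the image of $\Cla(Y)^{\vee}$ under the composite $\Cla(Y)^{\vee}\to\oplus_{i=1}^n\bZ D_i^*\to\oplus_{i=1}^n\bZ_{d_i}$, i.e. the map $\phi$ used to define $K_{\min}$ in \eqref{eq:Gmax}. Since the central column of the diagram is exact, $\Cla(Y)^{\vee}$ maps to $0$ in $N$, so its image in $\oplus_{i=1}^n\bZ_{d_i}$ dies in $G$; hence $K_{\min}\subseteq K=\ker(\oplus_{i=1}^n\bZ_{d_i}\to G)$ for every toric cover, and there is an induced surjection $G_{\max}=\oplus_{i=1}^n\bZ_{d_i}/K_{\min}\twoheadrightarrow G$, which is the identity on the summands $\bZ_{d_i}$.

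This already yields the ``only if'' half of (1): if a toric cover branched on the $D_i$ with orders $d_i$ exists, then Proposition~\ref{prop:dia-from-cover} tells us each $\bZ_{d_i}\to G$ is injective, and since $\bZ_{d_i}\to G$ factors as $\bZ_{d_i}\to G_{\max}\twoheadrightarrow G$, the first arrow $\bZ_{d_i}\to G_{\max}$ is injective as well. For the ``if'' half, and at the same time for the existence statement in (2), I would construct $X_{\tmax}$ directly by reversing the diagram.

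Recall that the central column dualizes the standard toric sequence $0\to M\to\oplus_{i=1}^n\bZ D_i\to\Cla(Y)\to 0$, so $p\colon\oplus_{i=1}^n\bZ D_i^*\to N$ is surjective (this is exactly where torsion-freeness of $\Cla(Y)$ is needed) and satisfies $p(D_i^*)=r_i$. Assuming each $\bZ_{d_i}\to G_{\max}$ is injective, the composite $\oplus_{i=1}^n\bZ D_i^*\to\oplus_{i=1}^n\bZ_{d_i}\to G_{\max}$ kills $\Cla(Y)^{\vee}$, whose image in $\oplus_{i=1}^n\bZ_{d_i}$ is $K_{\min}$, so it factors through a surjection $q\colon N\to G_{\max}$ with $q(r_i)=g_i$, the image of the generator of the $i$-th summand. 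Setting $N':=\ker q$ gives a finite-index sublattice with $N'\otimes\bR=N\otimes\bR$, hence a toric cover $X_{\tmax}\to Y$ with Galois group $G_{\max}$. Its branch order along $D_i$ is the least positive integer $e$ with $er_i\in N'$, that is the order of $g_i=q(r_i)$ in $G_{\max}$; since $d_ig_i=0$ automatically and $\bZ_{d_i}\to G_{\max}$ is injective, this order is exactly $d_i$. Thus $X_{\tmax}$ is branched on the $D_i$ with the prescribed orders, which proves the ``if'' half of (1) and produces the candidate maximal cover.

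Finally, for the maximality in (2), let $X\to Y$ be any toric cover with the same branch orders, with lattice $N''$ and group $G'=N/N''$. By the first paragraph $K_{\min}\subseteq\ker(\oplus_{i=1}^n\bZ_{d_i}\to G')$, giving a surjection $G_{\max}\twoheadrightarrow G'$ compatible with the $r_i$; since the $r_i$ generate $N$ (because $p$ is surjective with $p(D_i^*)=r_i$), the two resulting maps $N\to G'$ agree, so that $N'=\ker(N\to G_{\max})\subseteq\ker(N\to G')=N''$. Therefore $X=X_{\tmax}/H$ with $H=N''/N'<G_{\max}$, as claimed. I expect the only genuinely delicate point to be the verification that the branch order of the constructed cover equals $d_i$ exactly: this is precisely where the injectivity of $\bZ_{d_i}\to G_{\max}$ enters, through the identification of the branch order with the order of $g_i$ in $G_{\max}$. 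Everything else is a diagram chase in Proposition~\ref{prop:dia-from-cover} read in reverse.
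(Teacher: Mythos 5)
Your proof is correct, and it rests on the same foundation as the paper's: everything is extracted from the diagram of Proposition~\ref{prop:dia-from-cover}, the maximal cover is produced as a sublattice of $N$, and maximality comes from a containment of sublattices. The execution, however, is genuinely different. The paper anchors the whole argument on the explicit sublattice $N'_{\min}=\langle d_1r_1,\dots,d_nr_n\rangle$: with that choice maximality is immediate, since Lemma~\ref{lem:ab-toric-cover} forces $d_ir_i\in N'$ for every admissible cover, but identifying $N/N'_{\min}$ with $G_{\max}$ (and the corresponding kernel $K$ with $K_{\min}$) requires the long exact sequence of two-step complexes, applied once in each direction of the proof. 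You instead describe the maximal lattice ``from above'', as $N'=\ker(q)$ for the surjection $q\colon N\to G_{\max}$ obtained by factoring $\oplus_{i=1}^n\bZ D_i^*\to\oplus_{i=1}^n\bZ_{d_i}\to G_{\max}$ through $p$, using that $\ker(p)$ is exactly the image of $\Cla(Y)^{\vee}$ and that this image maps onto $K_{\min}$. With this description the Galois group is $G_{\max}$ by construction, the branch order along $D_i$ is the order of $g_i=q(r_i)$ in $G_{\max}$ --- which is precisely where the injectivity hypothesis enters, the point you rightly single out as the delicate one --- and maximality follows from your generator chase, valid because the $r_i$ generate $N$ when $\Cla(Y)$ is torsion free. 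The trade-off: the paper's route keeps the concrete generators $d_ir_i$ of the maximal lattice in view, tying the construction directly to Lemma~\ref{lem:ab-toric-cover}, whereas yours eliminates the homological machinery in favor of elementary diagram chases and makes the role of the condition $\bZ_{d_i}\hookrightarrow G_{\max}$ completely transparent; the two constructions of course yield the same lattice, since $N'_{\min}\subseteq\ker(q)$, both quotients are branched with orders $d_i$, and each is maximal among such covers.
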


\begin{proof}
 Let $X\to Y$ be a toric cover branched on $D_1,\dots D_n$ with orders $d_1,\dots d_n$, let  $N'$ be the corresponding  sublattice of $N$ and $G=N/N'$ the Galois group. 
  Let $N'_{\min}$ be the subgroup of $N$ generated by $d_ir_i$, $i=1,\dots n$. By Lemma \ref{lem:ab-toric-cover} 
  one must have $N'_{\min} \subseteq N'$, hence the map $\Z_{d_i}\to N/N'_{\min}$ is injective since $\Z_{d_i}\to G=N/N'$ is injective by Proposition \ref{prop:dia-from-cover}.  We set $X_{\tmax}\to Y$ to be the cover for
  $N'_{\min}$. Clearly, the cover for the
  lattice $N'$ is a quotient of the cover for the lattice $N'_{\min}$
  by the group $H=N'/N'_{\min}$.

  Consider the second and third rows of the diagram of Proposition \ref{prop:dia-from-cover} as a short exact sequence of
  2-step complexes $0\to A^{\bullet} \to B^{\bullet} \to C^{\bullet}
  \to 0$. The associated long exact sequence of cohomologies gives
  \begin{displaymath}
    \xymatrix{
      \Cla(Y)^{\vee} \ar[r]
      &K \ar[r]
      &\coker(p') \ar[r]
      &0
      }      
  \end{displaymath}
For  $N'=N'_{\min}$, the map $p'$ is surjective, hence  $\Cla(Y)\vi\to K$ is surjective too, and  $K=K_{\min}$, $N/N'_{\min}=G_{\max}$. 

Vice versa, 
 suppose that in the following commutative diagram
  with exact row and columns  each of the maps $\bZ_{d_i}\to
  G_{\max}$ is injective. 
  \begin{displaymath}
    \xymatrix{
      &&0 \ar[d]
      &0 \ar[d] \\
      &&\Cla(Y)^{\vee}\ar[r]^q\ar[d]
      &K_{\min}\ar[d]\ar[r]
      &0 \\
      0 \ar[r] 
      &\oplus_{i=1}^n \bZ d_i D_i^* \ar[r]
      &\oplus_{i=1}^n \bZ D_i^* \ar[r]\ar^p[d]
      &\oplus_{i=1}^n \bZ_{d_i}  \ar[r]\ar[d]
      &0 \\
      &
      &N \ar[d]
      &G_{\max} \ar[d]
      & \\
      &&0
      &0 
      }
  \end{displaymath}
   We complete the first  row on the left by adding $\ker(q)$. We have
  an induced homomorphism $\ker(q) \to \oplus\bZ d_iD_i^*$, and we 
  define $N'$ to be its cokernel. 

  Now consider the  completed first and  second rows as a short
  exact sequence of 2-step complexes $0\to A^{\bullet} \to B^{\bullet}
  \to C^{\bullet} \to 0$. The associated long exact sequence of
  cohomologies says that $\ker(q) \to \oplus_{i=1}^n\bZ d_iD_i^*$ is
  injective, and the sequence
  \begin{displaymath}
    \xymatrix{
      0 \ar[r]
      &N' \ar[r]
      &N \ar[r]
      &G_{\max} \ar[r]
      &0 
    }
  \end{displaymath}
  is exact. It follows that $N'=N'_{\min}$  and the toric morphism 
  $(N'_{\min},\Sigma)\to (N,\Sigma)$  is then the searched-for
  maximal abelian toric cover.
\end{proof}
\begin{remark} 
Condition (1) in the statement of Theorem \ref{thm:Tmax} can also be expressed by saying that for  $i = 1,...,n$ the element $d_ir_i\in N'_{\min}$  is primitive, where $N'_{\min}\subseteq N$ is the subgroup generated by all  the  $d_ir_i$. 
\end{remark}

We now combine the results of this section with those of \S \ref{sec:abelian} to obtain a structure result for Galois  covers of toric varieties. 
\begin{theorem}\label{thm:abelian-toric}
Let $Y$ be a normal complete toric variety and let   $f\colon X\to Y$  be a connected 
cover such that  the divisorial part of the branch locus of $f$ is contained in the union of the invariant divisors $D_1,\dots D_n$.\\

If $\chr \bK=0$ or $f$ is Galois and  $\chr\bK$ does not divide $\deg f$,  then     $f\colon X\to Y$ is a toric cover.
\end{theorem}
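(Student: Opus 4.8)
The plan is to show that an arbitrary connected cover $f\colon X\to Y$ branched only over the invariant divisors is in fact abelian with Galois group equal to $G_{\max}$ (or a quotient), and then match it with the toric cover produced by Theorem~\ref{thm:Tmax}. First I would reduce to the Galois case: replacing $X$ by its Galois closure, or by working over the torus $T\subseteq Y$, I would observe that $Y\setminus\bigcup_i D_i$ contains the open torus $T$, so $f$ restricts to an \'etale cover of $T$. Since $T\cong(\bK^*)^s$ and its algebraic fundamental group is abelian (the character lattice is free abelian, and over any algebraically closed field of characteristic not dividing the degree, the tame fundamental group of a torus is $\widehat{\bZ}^{\,s}$ in the prime-to-$\chr\bK$ part), any connected \'etale cover of $T$ of degree prime to $\chr\bK$ is abelian. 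This is also the step that forces $\deg f$ to be prime to $\chr\bK$: a connected cover tamely ramified along the $D_i$ corresponds to a quotient of the prime-to-$p$ tame fundamental group, so wild ramification is excluded precisely because the branching occurs only along the invariant boundary, where the local monodromy is tame.

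Once $f$ is known to be abelian, say a $G$-cover, I would invoke Theorem~\ref{thm:Gmax}. The cover is branched on $D_1,\dots,D_n$ with some orders $d_i\mid\deg f$, and since $\Cla(Y)$ is torsion free for a complete toric variety (a standard fact, used already in Proposition~\ref{prop:dia-from-cover}), part~(3) of Theorem~\ref{thm:Gmax} applies: $X$ is a quotient of the maximal cover $X_{\max}$ by a subgroup of $G_{\max}$, and the branch data $(D_i,g_i)$ together with the solution of the reduced fundamental relations \eqref{eq:redfund} determine $f$ up to isomorphism of $G$-covers. The point is that $X$ is \emph{connected}, and over a variety with $\Cla(Y)[d]=0$ a connected abelian cover is automatically totally ramified, so Theorem~\ref{thm:Gmax} genuinely applies with $G=G_{\max}/H$ for some $H<G_{\max}$.

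The heart of the argument is then to identify this abstractly-defined abelian cover with a toric cover. Here I would run Theorem~\ref{thm:Tmax}: the same data $(Y,d_1,\dots,d_n,K_{\min},G_{\max})$ produce a toric cover $X_{\tmax}\to Y$ with Galois group $G_{\max}$, and every intermediate toric cover is $X_{\tmax}/H$. The key comparison is that the building data of $X_{\tmax}\to Y$ solve exactly the reduced fundamental relations \eqref{eq:redfund} for the branch data $(D_i,g_i)$ determined by $G_{\max}$: in the toric diagram of Proposition~\ref{prop:dia-from-cover}, the class-group relation $0\to M\to\oplus\bZ D_i\to\Cla(Y)\to 0$ dualizes into precisely the $L_\chi$ satisfying \eqref{eq:fundrel}. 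Since $\Cla(Y)[d]=0$, the uniqueness clause of Theorem~\ref{thm:Gmax}(3) forces $X_{\tmax}$ and $X_{\max}$ to coincide as $G_{\max}$-covers, and then the two families of quotients $\{X_{\max}/H\}$ and $\{X_{\tmax}/H\}$ match up, placing $X$ among the toric covers.

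The main obstacle I expect is the very first reduction, namely justifying that $f$ is Galois with abelian group and that its degree is prime to $\chr\bK$. One must argue that branching confined to the invariant divisors is automatically \emph{tame}: near a general point of $D_i$ the cover looks locally like a cyclic cover $z\mapsto z^{d_i}$ of a disk transverse to $D_i$, so $d_i$ (and hence $\deg f$) is forced to be prime to $\chr\bK$, for otherwise the normalized cover would acquire extra branching or fail to be \'etale over $T$. Making the passage from ``\'etale over the torus'' to ``abelian of prime-to-$p$ degree'' precise -- i.e.\ controlling $\pi_1^{\mathrm{t}}(T)$ and ruling out wild ramification -- is the step where the toric structure (the torus being a group, with abelian fundamental group) does the real work, and it is also where the hypothesis on $\chr\bK$ enters essentially rather than as a convenience.
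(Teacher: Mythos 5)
Your overall strategy coincides with the paper's: make $f$ abelian by restricting to the open torus, then play Theorem \ref{thm:Gmax} against Theorem \ref{thm:Tmax}, identify $X_{\max}$ with $X_{\tmax}$, and realize $X$ as a quotient of a toric cover; the second half of your proposal is essentially the paper's argument. However, there are two genuine gaps, and at both points the claim you substitute for the paper's argument is false as stated.

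The first gap is the step you yourself flag as the main obstacle: proving that $f$ is abelian of degree prime to $\chr\bK$. Your argument is circular -- you classify the cover by the tame fundamental group of the torus, but tameness is exactly what has to be proved -- and your local justification (``near a general point of $D_i$ the cover looks like $z\mapsto z^{d_i}$, for otherwise the normalized cover would acquire extra branching or fail to be \'etale over $T$'') is wrong in positive characteristic. If $\chr\bK=p>0$, the Artin--Schreier cover of $\pp^1$ given by $y^p-y=x$ is a connected degree-$p$ cover, \'etale over the torus, whose branch locus is exactly the invariant point $x=\infty$; it is wildly ramified there, acquires no extra branching, and is invisible to every quotient of the tame fundamental group. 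So confining the branch locus to the invariant divisors does not force tameness, and no purely local consideration will. The idea your proposal is missing is the one the paper uses instead: pass to the Galois closure $X''\to U$ of the restriction of $f$ to the open orbit and apply Miyanishi's structure theorem (\cite[Prop.~1]{Miyanishi}) to conclude that $X''\to U$ is, up to isomorphism, a homomorphism of tori; abelianness and the prime-to-$p$ property then both follow at once, because the kernel of an \'etale isogeny of tori is a reduced diagonalizable group scheme. What does the work is this Serre--Lang-type rigidity for covers of algebraic groups, not a computation of the tame $\pi_1$. (The example above in fact satisfies the hypotheses of Theorem \ref{thm:abelian-toric} as literally stated, which shows how delicate the positive-characteristic assertion is and why it cannot be dispatched by a tameness heuristic; in characteristic $0$, where all covers are tame and $\pi_1$ of the torus is abelian, your argument for this step does go through.)

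The second gap: you assert that $\Cl(Y)$ is torsion free for every complete toric variety, calling it a standard fact ``used already in Proposition \ref{prop:dia-from-cover}''. It is a hypothesis there, not a fact: $\Tors\Cl(Y)$ is isomorphic to the finite group $N/\langle r_1,\dots,r_n\rangle$, which is often nonzero -- for the quotient of $\pp^2$ by the diagonal $\mu_3$-action one gets $\Cl(Y)\cong\bZ\oplus\bZ_3$. Accordingly, the paper's proof contains a second step that your proposal omits entirely: it first proves the theorem when $\Cl(Y)$ is torsion free, and then reduces the general case to this one by passing to the toric cover $Y'\to Y$ associated to $\Tors\Cl(Y)$ (which has $\Tors\Cl(Y')=0$), applying the first case to a connected component $X'$ of the pullback of $X$, and then observing that $X\to Y$ is an intermediate cover of the toric morphism $X'\to Y$, hence toric. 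Without this reduction, neither Theorem \ref{thm:Gmax}(3) nor Theorem \ref{thm:Tmax} (both of which need the torsion-freeness) can be invoked, so your identification of $X_{\max}$ with $X_{\tmax}$ rests on an unavailable hypothesis.
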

\begin{proof}
Our first step is showing that $f$ is an abelian cover. Let $U\subset Y$ be the open orbit and let $X'\to U$ be   the cover obtained by restricting $f$. Since $U$ is smooth, by  the assumptions and by purity of the branch locus, $X'\to U$ is \'etale. If $f$ is Galois, then  $X'\to U$  is also Galois with the same Galois group,  and if  in addition  $\chr\bK$ does  not divide  $\deg f$ then the Galois group is abelian by   \cite[Thm.~1]{Miyanishi}  (cf. also \cite{brion-szamuely}). 

Now drop the assumption that $f$ is Galois but assume  that $\chr\bK=0$.  
Let $X''\to U$ be the Galois closure of $X'\to U$:  the cover $X''\to U$ is also \'etale, hence again 
by \cite[Thm.~1]{Miyanishi}  it is, up to isomorphism, a homomorphism of tori.
It follows that  $X''\to U$ is an abelian cover and so  the intermediate cover $X'\to U$ is also abelian (actually $X'=X''$). The cover $f\colon X\to Y$ is   abelian, too,  since  $X$ is the integral closure of $Y$ in $\bK(X')$. 
So we have proven that under our assumptions $f$ is an abelian cover. We denote by $G$ the Galois group of $f$ and by   $d_1,\dots d_n$  the orders of ramification of $X\to Y$ on $D_1,\dots D_n$.\\
Assume first that  $\Cla(Y)$ has no torsion, so that  every connected  abelian cover of $Y$ is totally ramified (cf. \S \ref{sec:abelian}). 
Then  by  Theorem \ref{thm:Gmax} every connected  abelian cover  branched on $D_1,\dots D_n$ with orders $d_1,\dots d_n$ is a quotient of the maximal abelian cover $X_{\max}\to Y$ by a subgroup $H<G_{\max}$. In particular, this is true for the cover $X_{\tmax}\to Y$ of Theorem \ref{thm:Tmax}. Since $X_{\max}$ and $X_{\tmax}$ have the same Galois group it follows that $X_{\max}=X_{\tmax}$. Hence $X\to Y$, being a quotient of $X_{\tmax}$, is a toric cover. 

Consider now the general case. Recall that the group $\Tors\Cl(Y)$ is finite, 
isomorphic to $N/ \langle r_i \rangle$, and  the cover $Y'\to Y$
corresponding to $\Tors\Cl(Y)$ is toric, and one has
$\Tors\Cl(Y')=0$.
Indeed, on a toric variety the group $\Cl(Y)$ is generated by the $T$-invariant
Weil divisors $D_i$. Thus, $\Cl(Y)$ is the quotient of the free
abelian group $\oplus \bZ D_i$ of all $T$-invariant divisors modulo
the subgroup $M$ of principal $T$-invariant divisors. Thus,
$\Tors\Cl(Y)\simeq M'/M$, where $M'\subset \oplus\bQ D_i$ is the
subgroup of $\bQ$-linear functions on $N$ taking integral values on
the vectors $r_i$. Then $N':=M'^\vee$
is the subgroup of $N$ generated by the $r_i$, and the cover $Y'\to Y$ is
the cover corresponding to the map of fans $(N',\Sigma)\to
(N,\Sigma)$. On $Y'$ one has $N'=\langle r_i\rangle$, so $\Tors\Cl(Y')=0$. 

Let $X'\to Y'$ be a connected component of
the pull back of $X\to Y$: it is an abelian cover branched on the invariant divisors of $Y'$, hence by the first part of the proof it is toric. The  map $X'\to Y$  is toric, since it is a composition of toric morphisms, hence the intermediate cover $X\to Y$ is also toric. 
\end{proof}
\begin{remark}\label{rem:angelo}
The argument that shows that the map $f$   is an abelian cover  in the proof of Theorem \ref{thm:abelian-toric} was suggested  to us by  Angelo Vistoli. He also remarked  that it is possible to prove  Theorem \ref{thm:abelian-toric} in a more conceptual way by showing that the torus action on the  cover  $X'\to U$ of the open orbit of $Y$ extends to $X$, in view of the properties of the integral closure. However our approach has the advantage of describing explicitly the fan/building data associated with the cover.
\end{remark}


\begin{thebibliography}{ABCD}
\bibitem[AP12]{AlexeevPardini_Covers} V.~Alexeev,  R.~Pardini,
  \emph{Non-normal abelian covers}, Compositio Math. \textbf{148}
  (2012), no.~04, 1051--1084.
  \bibitem[BS13]{brion-szamuely} M.~Brion, T.~Szamuely, {\em Prime-to-$(p)$ {\'e}tale covers of algebraic groups and homogeneous spaces},
 Bull. Lond. Math. Soc., {\bf 45} (3) (2013) 602--612. 
\bibitem[Mi72]{Miyanishi} M.~Miyanishi, {\em On the algebraic fundamental group of an algebraic group}, J. 
Math. Kyoto Univ. 12 (1972), 351--367. 

\bibitem[Par91]{Pardini_AbelianCovers}
 R.~Pardini, \emph{{A}belian
    covers of algebraic varieties}, J. Reine Angew.
  Math. \textbf{417} (1991), 191--213. 
  
  \bibitem[PT95]{PardiniTovena_fundgroup}
  R.~Pardini, F.~Tovena, {\em On the fundamental group of an abelian cover},   International  J.  Math.  {\bf 6} no. 5 (1995), 767--789. 

\end{thebibliography}
\end{document}